\newtheorem{thr}{Theorem}[section]
\newtheorem{lem}[thr]{Lemma}
\newtheorem{cor}[thr]{Corollary}
\newtheorem{stat}[thr]{Proposition}
\theoremstyle{definition}
\newtheorem{defn}[thr]{Definition}
\newtheorem{quest}[thr]{Question}
\newtheorem{prob}[thr]{Problem}
\theoremstyle{remark}
\newtheorem{observ}[thr]{Observation}
\numberwithin{equation}{section}
\def\i{\infty}
\def\op{\oplus}
\def\ot{\otimes}
\def\R{\mathbf{R}}
\def\Z{\mathbf{Z}}
\def\Ro{\overline{\R}}
\def\A{{\mathcal A}}
\def\B{{\mathcal B}}
\def\C{{\mathcal C}}
\def\D{{\mathcal D}}
\def\Hh{{\mathcal H}}
\def\Q{{\mathcal Q}}
\def\Tt{{\mathcal T}}
\def\Vv{{\mathcal V}}
\def\vv{{^{ver}}}
\def\hh{{^{head}}}
\def\tt{{^{tail}}}
\begin{document}


\title[The complexity of tropical matrix factorization]{The complexity of tropical matrix factorization}

\author[Yaroslav Shitov]{Yaroslav Shitov}
\address{National Research University Higher School of Economics, 20 Myasnitskaya Ulitsa, Moscow 101000, Russia}
\email{yaroslav-shitov@yandex.ru}

\subjclass[2000]{68Q17, 15A23, 15A80}
\keywords{complexity theory, matrix factorization, tropical semiring}

\begin{abstract}
The tropical arithmetic operations on $\R$ are defined by $a\op b=\min\{a,b\}$ and $a\ot b=a+b$.
Let $A$ be a tropical matrix and $k$ a positive integer, the problem of Tropical Matrix Factorization (TMF) asks
whether there exist tropical matrices $B\in\R^{m\times k}$ and $C\in\R^{k\times n}$ satisfying $B\ot C=A$.
We show that no algorithm for TMF is likely to work in polynomial time for every fixed $k$,
thus resolving a problem proposed by Barvinok in 1993. 
\end{abstract}

\maketitle

\section{Introduction}

The \textit{tropical semiring} is the set $\R$ of real numbers equipped with the operations of tropical addition
and tropical multiplication, which are defined by $a\op b=\min\{a,b\}$, $a\ot b=a+b$.
The tropical semiring is essentially the same structure as the \textit{max-plus algebra},
which is the set $\R$ with the operations of maximum and sum, and is being studied since the 1960's,
when the applications in the optimization theory have been found~\cite{Vor}.
The tropical arithmetic operations on $\R$, which allow us to formulate a number of important non-linear
problems in a linear-like way, arise indeed in a variety of topics in pure and applied mathematics.
The study of tropical mathematics has applications in operations research~\cite{CG},
discrete event systems~\cite{BCOQ}, automata theory~\cite{Sim}, optimal control~\cite{KM, McE},
algebraic geometry~\cite{DSS, EKL}, and others; we refer to~\cite{GP2} for a detailed survey of applications.
A considerable number of important problems in tropical mathematics has a linear-algebraic nature.
For instance, the concepts of eigenvalue and eigenvector, the theory of linear systems,
and the algorithms for computing rank functions are useful for different applications~\cite{AGG, DSS, HOW, GP2}.
Some applications also give rise to studying the multiplicative structure of tropical matrices~\cite{Pin},
and in this context, the Burnside-type problems are important~\cite{Gau, Sim}.
Another interesting problem is to study the subgroup structure of the semigroup of tropical matrices under multiplication~\cite{IJK, JK2}.

In our paper, we consider the problem of matrix factorization, which is also related to the concept of factor rank
of matrices over semirings~\cite{BKS}. The study of factor rank dates back to the 1980's~\cite{BP2}, and has now
numerous applications in different contexts of mathematics. Being considered on the semiring of nonnegative matrices,
the factor rank is known as nonnegative rank and has applications in quantum mechanics, statistics, demography,
and others~\cite{CR}. The factor rank of matrices over the binary Boolean semiring is also called Boolean rank
and has applications in combinatorics and graph theory~\cite{Bea, GP}. Finally, for matrices over a field, the factor rank
coincides with the classical rank function.

In the context of matrices over the tropical semiring, the factor rank is also known as
\textit{combinatorial rank}~\cite{Barv2} and \textit{Barvinok rank}~\cite{DSS},
and the study of this notion has arisen from combinatorial optimization~\cite{Barv}.
The factor rank appears in the formulation of a number of problems in optimization, for instance, in the Traveling
Salesman problem with warehouses~\cite{Barv3}.
Also, the notion of factor rank is of interest in the study of tropical geometry~\cite{Dev2, DSS},
where the factor rank can be thought of as the minimum number of points whose tropical convex hull contains the columns of a matrix.
Let us define the factor rank function for tropical matrices, assuming that multiplication of tropical matrices
is understood as ordinary matrix multiplication with $+$ and $\cdot$ replaced by the tropical operations $\op$ and $\ot$.

\begin{defn}\label{factrank}
The \textit{factor rank} of a tropical matrix $A\in\R^{m\times n}$ is the smallest integer $k$ for which there exist
tropical matrices $B\in\R^{m\times k}$ and $C\in\R^{k\times n}$ satisfying $B\ot C=A$.
\end{defn}

The most straightforward way of computing the factor rank is based on the quantifier elimination algorithm
for the theory of reals with addition and order~\cite{FR}. Indeed, Definition~\ref{factrank} allows us to
define the set of all $m$-by-$n$ matrices with factor rank $k$ by a first-order order formula. We then
employ the decision procedure based on the quantifier elimination algorithm provided in~\cite{FR} to
check whether a given $m$-by-$n$ matrix indeed has factor rank $k$.

However, the computational complexity of quantifier elimination makes the algorithm
mentioned unacceptable for practical use. Another algorithm for computing
the factor rank is given by Develin in the paper~\cite{Dev2}, where he develops
the theory of tropical secant varieties. He characterizes the factor rank from
a point of view of tropical geometry, and the characterization obtained provides
an algorithm for computing the factor rank.

Unfortunately, neither the algorithm by Develin nor any other algorithm is likely to compute the factor rank
of a tropical matrix in polynomial time.
Indeed, the problem of computing the factor rank is NP-hard even for tropical $01$-matrices, see~\cite{DSS}.
In other words, the general problem of \textit{Tropical Matrix Factorization} (\textit{TMF}),
which asks whether a given matrix $A$ and a given integer $k$ are such that $A=B\ot C$ for some $B\in\R^{m\times k}$ and
$C\in\R^{k\times n}$, turns out to be NP-hard.

Besides the general problem of computing the factor rank, the problem that deserves attention is that of detecting matrices with fixed
factor rank. Certain hard problems of combinatorial optimization admit fast solutions if the input
matrices are required to have factor rank bounded by a fixed number~\cite{Barv, Barv2}.
The Traveling Salesman problem (TSP) also admits a fast solution if we require the distance
matrix to have a fixed factor rank~\cite{Barv3, Barv4}; this special case of TSP is also
known as TSP with warehouses~\cite{Barv3}. Matrices with bounded factor rank arise naturally
in the problems mentioned and in a number of other problems in combinatorial optimization~\cite{Barv} and tropical geometry~\cite{DSS}.
These considerations led to the following interesting question on tropical matrix factorizations.

\begin{quest}\cite{Barv, Barv2}\label{que1}
Does there exist an algorithm that solves TMF for every fixed $k$ in polynomial time?
\end{quest}

In~\cite{Barv}, Barvinok expected that Question~\ref{que1} can be answered in the positive,
this question has also been mentioned in~\cite{Barv2}. Further investigations on the problem
of determining matrices with fixed factor rank have been carried out in~\cite{CRW, Dev1, Dev2, DSS, HJ}.
However, the problem remained open and has been formulated again in~\cite{DSS}, where its connections
with tropical geometry were pointed out. To formulate another problem on the complexity of matrix factorizations
posed in~\cite{DSS}, we define the \textit{tropical rank} of a matrix as the topological dimension of the tropical
linear span of its columns.

\begin{quest}\label{que3}\cite[Section 8, Question 3b]{DSS}
Is there a polynomial-time algorithm for the factor
rank of matrices with bounded tropical rank?
\end{quest}

The progress in solving the problems we have mentioned has mostly been based on studying matrices with
factor rank at most $2$, and the set of such matrices is now indeed well studied.
First, the TMF problem with $k\leq2$ can be solved by a linear-time algorithm, see~\cite{CRW, DSS}.
Further, it has been proven in~\cite{DSS} that the factor rank
of a matrix is at most $2$ if and only if all its $3$-by-$3$ minors have factor ranks
at most $2$. The set of $d$-by-$n$ matrices with factor rank $2$ has been studied
as a simplicial complex in~\cite{Dev1}. This set has also been studied from the topological
point of view in~\cite{HJ}, and the space of $d$-by-$n$ matrices of factor rank two modulo
translation and rescaling has been shown to form a manifold.
For general $k$, the question of fast algorithm for recognizing tropical matrices with factor rank $k$ remained open.


The following notation is used throughout our paper. By $A_{ij}$ or $[A]_{ij}$ we will denote the $(i,j)$th entry of a matrix $A$,
by $A[r_1,\ldots,r_p|c_1,\ldots,c_q]$ the submatrix of $A$ formed by the rows with labels $r_1,\ldots,r_p$ and columns with $c_1,\ldots,c_q$.
We write $B\geq C$ or $C\leq B$ for matrices $B,C\in\R^{m\times n}$ if $B_{ij}\geq C_{ij}$, for any indexes $i$ and $j$.
We will say that matrices $B$ and $C$ coincide modulo \textit{scaling} if there exist real numbers
$\alpha_1,\ldots,\alpha_m$, $\beta_1,\ldots,\beta_n$ such that $B_{ij}=C_{ij}+\alpha_i+\beta_j$ for all $i$ and $j$.
Straightforwardly, $B$ and $C$ have the same factor ranks if they differ only by scaling.

\section{Preliminaries}\label{prelim}

In our paper, we answer Question~\ref{que1}, showing that no algorithm is likely to solve TMF in polynomial time for every fixed $k$.
More precisely, we will show that it is NP-hard to decide whether a given tropical matrix has factor rank at most $7$.
So we are interested in the TMF problem restricted to matrices of fixed rank in our paper, and let us give a precise formulation of
that problem. Our NP-hardness result on the tropical factorization will remain true for matrices consisting of integers, so it will
be convenient for us to assume that the input matrices have integer entries. For a positive integer $k$, the problem we deal with is as follows.

\begin{prob}\label{facrankprob}\label{probmain} TROPICAL MATRIX $k$-FACTORIZATION ($k$-TMF).

Given a tropical matrix $A\in\Z^{m\times n}$.

Question: Do there exist tropical matrices $B\in\R^{m\times k}$
and $C\in\R^{k\times n}$ satisfying $B\ot C=A$?
\end{prob}

We may assume that the input integers in Problem~\ref{probmain} are written in the decimal system.
However, the results we prove and all our considerations remain true even if the integers are written in unary.
Now we will prove that the tropical factorization problem belongs to the class NP
if the input matrices are required to consist of integers, and we need the following technical lemma.

\begin{lem}\label{NP-tmf-lem}
Let a matrix $A\in\Z^{m\times n}$ have factor rank at most $k$. Let $g$ and $l$ denote, respectively,
the greatest and the least elements of $A$, set also $h=|g|+|l|$. Then there exist matrices $B\in\Z^{m\times k}$
and $C\in\Z^{k\times n}$ such that $B\ot C=A$ and $|B_{i\tau}|\leq h$, $|C_{\tau j}|\leq h$ for any triple of indexes $(i, j, \tau)$.
\end{lem}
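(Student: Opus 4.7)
Given a real factorization $A=B\ot C$ of factor rank at most $k$, my plan is to reshape $B$ and $C$ in three stages into integer matrices with entries bounded by $h$, preserving the identity with $A$ at each step. \emph{Stage 1 (integrality).} For every pair $(i,j)$ I fix an index $\tau_{ij}\in\{1,\dots,k\}$ with $A_{ij}=B_{i\tau_{ij}}+C_{\tau_{ij},j}$. The resulting constraints
\[
B_{i\tau_{ij}}+C_{\tau_{ij},j}=A_{ij}\ \text{for all }(i,j),\qquad B_{i\tau}+C_{\tau j}\ge A_{ij}\ \text{for }\tau\ne\tau_{ij},
\]
form a linear system whose coefficient matrix has exactly two non-zero entries per row, both equal to $+1$, one at a $B$-variable and one at a $C$-variable. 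Hence it is the incidence matrix of a bipartite graph and so is totally unimodular. The right-hand side is integer and a real feasible point exists, so the Hoffman--Kruskal theorem produces integers $\tilde B\in\Z^{m\times k}$, $\tilde C\in\Z^{k\times n}$ with $\tilde B\ot\tilde C=A$.

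\emph{Stage 2 (tighten $C$).} For each $\tau$ I subtract $\min_i\tilde B_{i\tau}$ from the $\tau$th column of $\tilde B$ and add the same integer back to the $\tau$th row of $\tilde C$; this preserves $\tilde B\ot\tilde C=A$, keeps everything integer, and achieves $\tilde B_{i\tau}\ge 0$ with equality attained at some row $i_0=i_0(\tau)$. I then set
\[
C^*_{\tau j}:=\max_i\bigl(A_{ij}-\tilde B_{i\tau}\bigr).
\]
The inequalities $\tilde B_{i\tau}+\tilde C_{\tau j}\ge A_{ij}$ give $C^*\le\tilde C$ entry-wise, so $\tilde B\ot C^*\le \tilde B\ot\tilde C=A$; the reverse inequality is immediate from the definition of $C^*$. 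Hence $\tilde B\ot C^*=A$, and $C^*\in\Z^{k\times n}$ because $\tilde B$ and $A$ are integer. The bounds $A_{ij}\le g$ and $\tilde B_{i\tau}\ge 0$ yield $C^*_{\tau j}\le g$, while the row $i_0$ forces $C^*_{\tau j}\ge A_{i_0(\tau),j}\ge l$; so $|C^*_{\tau j}|\le h$.

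\emph{Stage 3 (tighten $B$).} I repeat the construction symmetrically by setting $B^{**}_{i\tau}:=\max_j(A_{ij}-C^*_{\tau j})\in\Z$; the argument of Stage 2 then gives $B^{**}\ot C^*=A$. Because $l\le C^*_{\tau j}\le g$, we have $A_{ij}-C^*_{\tau j}\in[l-g,g-l]$, whence $|B^{**}_{i\tau}|\le g-l\le|g|+|l|=h$. The pair $(B^{**},C^*)$ thus fulfils every requirement of the lemma.

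The main obstacle is Stage 1: a priori a real factorization may have irrational entries with no obvious integer counterpart, and the proof must exploit the bipartite ``one $B$-variable plus one $C$-variable'' structure of the linear system in order to invoke total unimodularity. Once integrality is secured, Stages 2 and 3 reduce to standard manipulations in min-plus algebra.
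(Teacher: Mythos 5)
Your proof is correct, but it takes a genuinely different route from the paper's, and it is worth noting where. For integrality (your Stage~1), the paper does not need any polyhedral machinery: it simply rounds, replacing $B'_{i\tau}$ by its floor and $C'_{\tau j}$ by its ceiling, and observes that since each sum then moves by strictly less than $1$ and $A$ is integer, every equality $B'_{i\tau}+C'_{\tau j}=A_{ij}$ and every inequality $\geq A_{ij}$ survives; this is a two-line argument where you invoke total unimodularity of the bipartite incidence matrix of the system with a chosen ``witness'' index $\tau_{ij}$ per entry. Your argument is sound, with one small caveat in its phrasing: the polyhedron you describe contains lines (shift a column of $B$ by $t$ and the corresponding row of $C$ by $-t$), so it has no vertices, and you should appeal to the general integrality statement for TU systems with integral right-hand sides (every nonempty face contains an integral point) rather than to the vertex form of Hoffman--Kruskal; with that reading the step is fine, and it even yields slightly more (the set of integer factorizations with a prescribed witness pattern is an integral polyhedron). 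For the bounds, the paper normalizes the columns of $B$ to have minimum $0$, deduces $C\geq l$ exactly as you do, and then simply truncates all entries exceeding $h$ at $h$, checking this cannot change the product; you instead replace $C$ and then $B$ by the max-plus residuals $C^*_{\tau j}=\max_i(A_{ij}-\tilde B_{i\tau})$ and $B^{**}_{i\tau}=\max_j(A_{ij}-C^*_{\tau j})$, which is a clean alternative and gives the slightly sharper bounds $l\leq C^*\leq g$ and $|B^{**}|\leq g-l$, both of which imply the stated bound $h=|g|+|l|$. In short: the paper's proof is more elementary (rounding plus truncation), while yours trades that for standard but heavier LP integrality machinery and a tidy residuation argument; both establish the lemma.
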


\begin{proof}
By Definition~\ref{factrank}, there are matrices $B'\in\R^{m\times k}$
and $C'\in\R^{k\times n}$ satisfying $B'\ot C'=A$. Consider matrices $B''$ and $C''$
defined by $B''_{i\tau}=B'_{i\tau}-\{B'_{i\tau}\}$, $C''_{\tau j}=C'_{\tau j}+\{-C'_{\tau j}\}$,
where $\{x\}=x-[x]$ is the fractional part of $x$.
The matrices $B''$ and $C''$ are then integer, and we have $|(B''_{i\tau}+C''_{\tau j})-(B'_{i\tau}+C'_{\tau j})|<1$.
Therefore, $B'_{i\tau}+C'_{\tau j}=A_{ij}$ implies $B''_{i\tau}+C''_{\tau j}=A_{ij}$, and from
$B'_{i\tau}+C'_{\tau j}\geq A_{ij}$ it follows that $B''_{i\tau}+C''_{\tau j}\geq A_{ij}$.
Thus we have that $B'\ot C'=A$ implies $B''\ot C''=A$.

Further, we subtract $b_\tau$, the least element of the $\tau$th column of $B''$, from every entry of the $\tau$th column of $B''$ and
add $b_\tau$ to every entry of the $\tau$th row of $C''$. The matrices $B$ and $C$ obtained satisfy $B\ot C=A$, and zero appears as the
minimal element of every column of $B$. The definition of matrix multiplication then shows that every entry of $C$ is greater than or equal
to $l$. Finally, those entries of $B$ and $C$ that are greater than $|g|+|l|$ can be then replaced by $|g|+|l|$ without changing the product $B\ot C$.
\end{proof}

Now we can prove our first results concerning the computational complexity of tropical matrix factorization.

\begin{thr}\label{NP-tmf}
Given a tropical matrix $A\in\Z^{m\times n}$ and a positive integer $k$. The problem
of deciding whether $A$ has factor rank at most $k$ belongs to the class NP.
\end{thr}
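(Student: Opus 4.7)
My plan is to place $k$-TMF in NP by exhibiting a small factorization as the witness and reducing verification to basic integer arithmetic; Lemma~\ref{NP-tmf-lem} is the decisive ingredient.

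First, I dispose of the trivial regime $k\geq \min(m,n)$. In this case the answer is automatically YES: choose $M>2\max_{i,j}|A_{ij}|$, let $B=A\in\R^{m\times n}$, and let $C\in\R^{n\times n}$ be defined by $C_{jj}=0$ and $C_{\tau j}=M$ for $\tau\neq j$; a direct computation gives $B\ot C=A$, and any larger inner dimension is obtained by padding with rows whose entries are $M$. Hence we may henceforth assume $k<\min(m,n)$, in which case the integer $k$ itself has polynomially bounded bit length.

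Next, given a YES instance, Lemma~\ref{NP-tmf-lem} provides integer matrices $B\in\Z^{m\times k}$ and $C\in\Z^{k\times n}$ with $B\ot C=A$ and every entry bounded in absolute value by $h=|g|+|l|$, where $g,l$ are the extremal entries of $A$. Since $h\leq 2\max_{i,j}|A_{ij}|$, the number $\log h$ is bounded by the input size; thus the pair $(B,C)$ can be written in $O((m+n)k\log h)$ bits, which is polynomial in the input length. This will serve as the certificate.

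Finally, verification is routine: for each of the $mn$ entries of $A$, compute $(B\ot C)_{ij}=\min_{\tau}(B_{i\tau}+C_{\tau j})$ using $k$ integer additions and $k-1$ comparisons on operands of polynomial bit length, and test equality with $A_{ij}$. The total running time is polynomial. The only potential obstacle — keeping the certificate polynomially bounded in the face of real-valued factorizations and an input-dependent inner dimension $k$ — is exactly what the trivial-regime reduction and Lemma~\ref{NP-tmf-lem} settle, so no further ingredients should be needed beyond the steps above.
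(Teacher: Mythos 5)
Your proposal is correct and follows essentially the same route as the paper: handle the regime $k\geq\min\{m,n\}$ trivially, then use Lemma~\ref{NP-tmf-lem} to get an integer certificate $(B,C)$ of polynomially bounded bit length, verified by direct tropical multiplication. One small repair: your explicit construction for the trivial regime ($B=A$, $C$ the $n\times n$ matrix with $0$ on the diagonal and $M$ elsewhere) produces inner dimension $n$, so when $m<n$ it does not literally cover the range $\min\{m,n\}\leq k<n$; either transpose the construction (take $C=A$ and $B$ the $m\times m$ zero-diagonal/$M$ matrix, padded to inner dimension $k$) or simply invoke the bound that the factor rank never exceeds $\min\{m,n\}$, as the paper does by citing Proposition~2.1 of~\cite{DSS}.
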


\begin{proof}
The factor rank of $A$ is at most $\min\{m,n\}$ (see~\cite[Proposition~2.1]{DSS}), so the problem can be
solved immediately if $k\geq\min\{m,n\}$. For $k<\min\{m,n\}$, the result follows from Lemma~\ref{NP-tmf-lem}.
\end{proof}

Theorem~\ref{NP-tmf} shows that TMF, the general problem of tropical matrix factorization, belongs to NP if the input
matrices are assumed to consist of integers. The problem $k$-TMF, introduced in this section, is thus in NP as well.

\begin{thr}\label{NP-k-tmf}
The $k$-TMF problem belongs to NP.
\end{thr}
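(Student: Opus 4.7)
The plan is to observe that $k$-TMF is nothing but a restriction of the general TMF problem addressed in Theorem~\ref{NP-tmf}: any instance of $k$-TMF can be converted into an instance of TMF simply by appending the value $k$, which adds only a constant number of symbols to the input. Since restriction to a subfamily of inputs cannot take a problem out of NP, the conclusion is essentially immediate from Theorem~\ref{NP-tmf}.

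If a self-contained argument is preferred, I would re-run the verification argument from the proof of Theorem~\ref{NP-tmf}. When $k\geq\min\{m,n\}$, the answer is affirmative by Proposition~2.1 of~\cite{DSS} and no certificate is needed. When $k<\min\{m,n\}$, Lemma~\ref{NP-tmf-lem} guarantees that a yes-instance admits integer witnesses $B\in\Z^{m\times k}$ and $C\in\Z^{k\times n}$ satisfying $B\ot C=A$ whose entries are bounded in absolute value by $h=|g|+|l|$, where $g$ and $l$ denote the greatest and least entries of $A$. Such a pair $(B,C)$ is written using at most $(m+n)k(\log_{10} h+3)$ symbols, which remains polynomial in the input size $\log_{10}(h/2)+mn$. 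A verifier then merely computes the tropical product $B\ot C$ entry by entry and compares it to $A$, which is clearly polynomial-time.

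There is no genuine obstacle here: the heavy lifting, namely the rounding-and-truncation argument that replaces arbitrary real witnesses by integer ones of controlled magnitude, was already performed in Lemma~\ref{NP-tmf-lem}. Theorem~\ref{NP-k-tmf} is therefore best viewed as a corollary of Theorem~\ref{NP-tmf}, isolated for convenient later reference, and its proof amounts to citing the previous result.
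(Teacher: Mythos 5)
Your proposal is correct and matches the paper, whose entire proof of this theorem is the single line ``Follows from Theorem~\ref{NP-tmf}''; your observation that $k$-TMF is just a restriction of the problem handled there, together with the optional re-run of the certificate argument via Lemma~\ref{NP-tmf-lem}, is exactly the intended reasoning.
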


\begin{proof}
Follows from Theorem~\ref{NP-tmf}.
\end{proof}

The goal of our paper is to show that the $k$-TMF problem is NP-complete for any $k\geq7$.
Since $k$-TMF is proven to be in NP in the present section, we now need to construct a polynomial
reduction from some known NP-complete problem to $k$-TMF. The method used for that construction
can be briefly outlined as follows.

Our reduction will use matrices over the \textit{extended tropical semiring} $\Ro=\R\cup\{\i\}$ rather than usual
tropical matrices, and the concept of the extended tropical semiring is introduced in Section~\ref{secext}. It is
also explained in that section why we are allowed to use matrices over $\Ro$ for proving NP-completeness of
factorizations over $\R$. In Section~\ref{secfactint}, we study a relaxed version of the factorization problem that we call
\textit{intermediate factorization}. This relaxed version consists in finding a matrix $M$ which has factor rank not
exceeding $k$ and satisfies $A\leq M\leq B$, for given matrices $A$ and $B$ and an integer $k$. Note that when $A=B$, this problem
is the usual TMF, and we show in Section~\ref{secfactint} that intermediate factorization can in turn be reduced to TMF. The key of our
paper is Section~\ref{secgraphcol}, where we present a reduction to indermediate factorization from a classical NP-complete
problem of the graph colorability. Then, after proving some technical propositions in Section~\ref{secfactB}, we put the
results of previous sections together and prove our main results in Section~\ref{secmain}.

%

\section{Extended tropical semiring}\label{secext}

In our NP-completeness proof for the $k$-TMF problem, we will use the matrices over a certain
extension of the tropical semiring which we call here the \textit{extended tropical semiring}.
Namely, we extend the tropical semiring by an infinite positive element, which we denote by $\i$.
We also write $\Ro$ for $\R\cup\{\i\}$ and assume $a\op\i=a$, $a\ot\i=\i$ for $a\in\Ro$.

Defining the factor rank, one can now think of a tropical matrix $A\in\R^{m\times n}$ as a matrix over $\Ro$, and
allow matrices $B$ and $C$ from Definition~\ref{factrank} to contain infinite elements.
However, it turns out that the rank function defined in this way is
the same as that defined with respect to Definition~\ref{factrank}.
Indeed, if matrices $A\in\R^{m\times n}$, $B\in\Ro^{m\times k}$, $C\in\Ro^{k\times n}$ satisfy $A=B\ot C$,
then we can replace infinite entries of $B$ and $C$ with a sufficiently large real without changing the product of the matrices.

We can therefore extend Definition~\ref{factrank} to the case of matrices over $\Ro$ and define
the \textit{factor rank} of $A\in\Ro^{m\times n}$ to be the smallest integer $k$ for which there exist
$B\in\Ro^{m\times k}$ and $C\in\Ro^{k\times n}$ satisfying $A=B\ot C$. The following technical
lemma will play an important role in the considerations of our paper.

\begin{lem}\label{Ro->R}
Let zero appear as a minimal element of every row and every column of a matrix $A\in\Ro^{m\times n}$.
Consider the matrix $A'$ obtained from $A$ by replacing every infinite entry with the number $2g+1$,
where $g$ stands for the maximal finite entry of $A$. Then the factor ranks of $A$ and $A'$ are the same.
\end{lem}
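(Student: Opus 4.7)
The plan is to establish both inequalities $\mathrm{rk}(A')\le\mathrm{rk}(A)$ and $\mathrm{rk}(A)\le\mathrm{rk}(A')$ (writing $\mathrm{rk}$ for factor rank) by modifying a factorization with inner dimension $k$ into a factorization of the other matrix. In each case I would first normalize via the scaling described in the introduction: for each index $\tau$, subtract the minimum finite entry of the $\tau$th column of the left factor from that column and add the same number to the $\tau$th row of the right factor, which does not affect the tropical product. A short check using the hypothesis that $0$ is the minimum of every row and every column of $A$ (resp.\ $A'$) shows that afterwards every finite entry of both factors is non-negative; columns of the left factor consisting entirely of $\i$ can be discarded, which only lowers the inner dimension.

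For $\mathrm{rk}(A')\le\mathrm{rk}(A)$, take a normalized $B\ot C=A$ and form $\tilde B,\tilde C$ by replacing every occurrence of $\i$ with the real number $2g+1$. For a finite entry $A_{ij}\le g$, the tropical minimum defining it has an achiever $\tau^{\star}$ with both summands in $[0,g]$ (they sum to $\le g$ and are non-negative), so this term is preserved; any $\tau$-term containing a replaced $\i$ contributes at least $2g+1>g\ge A_{ij}$, not spoiling the minimum. For an entry $A_{ij}=\i$, pick any row $i_0$ with $A_{i_0 j}=0$ and let $\tau_0$ be an achiever of that zero, so that $B_{i_0\tau_0}=C_{\tau_0 j}=0$. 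The tropical minimum $\min_\tau(B_{i\tau}+C_{\tau j})$ equals $\i$, so every term is $\i$; in particular the $\tau_0$-term $B_{i\tau_0}+0$ equals $\i$, forcing $B_{i\tau_0}=\i$. Hence the $\tau_0$-term of $\tilde B\ot\tilde C$ equals exactly $(2g+1)+0=2g+1$, and every other $\tau$-term contains at least one replaced $\i$ and contributes $\ge 2g+1$, so the minimum equals $2g+1=A'_{ij}$.

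For the reverse direction $\mathrm{rk}(A)\le\mathrm{rk}(A')$, take a normalized $B'\ot C'=A'$ and form $\tilde B,\tilde C$ by replacing every finite entry of $B'$ or $C'$ strictly exceeding $g$ with $\i$. Finite entries of $A$ are handled as above: the achiever has both summands in $[0,g]$, so is untouched, and the other $\tau$-terms can only grow. For $A_{ij}=\i$, i.e.\ $A'_{ij}=2g+1$, every term satisfies $B'_{i\tau}+C'_{\tau j}\ge 2g+1$, so non-negativity forces at least one summand to strictly exceed $g$; that summand becomes $\i$ under the replacement, making every $\tau$-term of $\tilde B\ot\tilde C$ equal to $\i$, and the minimum is $\i$. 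The main subtlety I expect is the sharpness of the constant $2g+1$: it must be strictly greater than $2g$ so that the reverse direction's threshold argument forces separation past $g$, and no larger than what a single $\i+0$ pair can attain so that the forward direction can realize it — which is precisely why the normalization producing non-negative factors with a $0$ in every column of the right factor is essential.
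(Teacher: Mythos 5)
Your proof is correct. One half of it coincides with the paper's: for $\mathrm{rk}(A)\le\mathrm{rk}(A')$ the paper likewise normalizes a factorization $B'\ot C'=A'$ so that both factors are nonnegative and then replaces every entry exceeding $g$ by $\infty$, the verification being exactly your ``both summands of an achiever lie in $[0,g]$'' argument. For the other inequality the paper takes a shortcut you did not: it simply observes that $A'=E\ot A$, where $E\in\Ro^{m\times m}$ has $E_{ii}=0$ and $E_{ij}=2g+1$ for $i\ne j$ (this identity uses the same two consequences of the hypothesis that drive your argument, namely $A\ge 0$ and the presence of a zero in every column away from any $\infty$ position), so that any factorization $A=B\ot C$ with inner dimension $k$ gives $A'=(E\ot B)\ot C$ of the same inner dimension, with no normalization of $B$ and $C$ required. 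Your route for this direction --- normalize $B,C$, substitute $2g+1$ for every $\infty$, and use a zero in column $j$ of $A$ to produce an index $\tau_0$ with $C_{\tau_0 j}=0$ and $B_{i\tau_0}=\infty$, so that the value $2g+1$ is attained exactly --- is a valid alternative; it is more hands-on and symmetric with the first direction, at the cost of the normalization bookkeeping and the exact-attainment step, whereas the paper's $E$-trick is essentially a one-line appeal to the fact that multiplying on the left cannot increase factor rank.
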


\begin{proof}
Consider matrices $B'\in\R^{m\times k}$ and $C'\in\R^{k\times n}$ satisfying $B'\ot C'=A'$. For $t\in\{1,\ldots,k\}$,
we subtract $b_t$, the least element of the $t$th column of $B$, from every entry of that column and add $b_t$ to every entry of
the $t$th row of $C$. The matrices $B''$ and $C''$ obtained satisfy $B''\ot C''=A'$, and zero appears as a minimal element
of every row of $B''$ and of every column of $C''$. Further, we replace by $\i$ every entry of $B''$ and $C''$ that is greater
than $g$, and we denote the matrices obtained by $B$ and $C$. It is then easy to check that $B\ot C=A$.

So we have proven that the factor rank of $A$ is at most that of $A'$. It is thus sufficient to note that $A'=E\ot A$, where
$E$ stands for an $m$-by-$m$ tropical matrix satisfying $E_{ij}=0$ if $i=j$ and $E_{ij}=2g+1$ otherwise.
\end{proof}

The following easy observation will also be useful.

\begin{stat}\label{block}
The factor rank of a tropical matrix $A\in\Ro^{m\times n}$ is one less than that of the matrix $A'\in\Ro^{(m+1)\times (n+1)}$ defined by
$A'_{ij}=A_{ij}$, $A'_{i,n+1}=A'_{m+1,j}=\i$ for $i\leq m$, $j\leq n$, and $A'_{m+1,n+1}=0$.
\end{stat}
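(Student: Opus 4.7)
The plan is to prove the two inequalities separately. Write $k$ for the factor rank of $A$; I want to show that $A'$ has factor rank exactly $k+1$, where throughout the argument the factor rank of $A'$ is computed over $\Ro$ (the extension is harmless by the paragraph preceding Lemma~\ref{Ro->R}).

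For the upper bound, I would start from a factorization $A=B\ot C$ with $B\in\R^{m\times k}$, $C\in\R^{k\times n}$, and enlarge it by one inner coordinate. Set $B'\in\Ro^{(m+1)\times(k+1)}$ by $B'_{ij}=B_{ij}$ for $i\leq m,\,j\leq k$, $B'_{m+1,k+1}=0$, and $B'=\i$ at all remaining positions; define $C'\in\Ro^{(k+1)\times(n+1)}$ analogously, placing a $0$ at entry $(k+1,n+1)$ and $\i$ at every other entry outside the original $k\times n$ block. A routine entry-by-entry check of $B'\ot C'$ reproduces $A'$: the old $m\times n$ block uses only the first $k$ inner coordinates (the $(k+1)$st contributes $\i$), the last row and last column are forced to $\i$ because the only finite entry in their inner paths pairs with an $\i$, and the corner $(m+1,n+1)$ realizes $0+0=0$ through the new coordinate.

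For the lower bound I want to show that any factorization $A'=B'\ot C'$ with inner dimension $k'$ produces a factorization of $A$ with inner dimension $k'-1$. The crucial step is to find a ``dedicated'' inner coordinate that can be discarded. Since $A'_{m+1,n+1}=0$, there is some $\tau_0$ with $B'_{m+1,\tau_0}+C'_{\tau_0,n+1}=0$, and both terms must be finite. Now for each $j\leq n$ the equation $A'_{m+1,j}=\i$ forces $B'_{m+1,\tau}+C'_{\tau,j}=\i$ for every $\tau$; specializing to $\tau=\tau_0$ and using that $B'_{m+1,\tau_0}$ is finite gives $C'_{\tau_0,j}=\i$. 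The symmetric argument on column $n+1$ yields $B'_{i,\tau_0}=\i$ for every $i\leq m$. Consequently, in the product $(B'\ot C')_{ij}$ with $i\leq m,\,j\leq n$ the contribution from $\tau=\tau_0$ is $\i+\i=\i$ and may be dropped. Deleting row $m+1$ and column $\tau_0$ of $B'$, and row $\tau_0$ and column $n+1$ of $C'$, therefore leaves a factorization of $A$ of inner dimension $k'-1$, so the factor rank of $A$ is at most $k'-1$, as required.

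The argument is essentially bookkeeping once the right picture is in place; the only point that requires care is the ``dedicated coordinate'' step, where I have to invoke both the finiteness coming from $A'_{m+1,n+1}=0$ and the infiniteness coming from the surrounding $\i$-entries in order to conclude that a whole column of $B'$ and a whole row of $C'$ are $\i$ outside the augmented row/column. No deeper obstacle seems to be present.
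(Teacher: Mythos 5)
Your argument is correct; the paper states Proposition~\ref{block} as an easy observation without proof, and your two-inequality bookkeeping (augmenting a given factorization of $A$ by one dedicated inner coordinate for the upper bound, and isolating the inner coordinate that serves the corner entry $A'_{m+1,n+1}=0$ and deleting it for the lower bound) is exactly the intended routine verification, carried out over $\Ro$ where, as the paper notes before Lemma~\ref{Ro->R}, the factor rank of the real matrix $A$ is unchanged. The only cosmetic point worth adding is that in the deletion step some inner index other than $\tau_0$ must survive, which is automatic because the entries $A_{ij}$ are finite while the $\tau_0$-contribution is $\i$; hence the resulting factorization of $A$ indeed has inner dimension $k'-1\geq 1$.
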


While the definitions do not provide a direct connection between the factorizations over $\Ro$ and the usual
tropical factorizations, the use of the extended semiring will be helpful for proving the NP-completeness of $k$-TMF.
Note that, given a matrix $A$ over $\Ro$, one can perform an appropriate scaling on $A$ to obtain a matrix $A'$ with
the same factorization rank as $A$ but having zero as the minimal entry of every row and every column. Then one can apply
Lemma~\ref{Ro->R}, thus reducing the problem of matrix factoring over $\Ro$ to that over the usual tropical semiring.

\section{Factoring intermediate matrices}\label{secfactint}

In this section, we study the problem which we call the factorization of an \textit{intermediate matrix}:
Given tropical matrices $A,B\in\Ro^{m\times n}$ satisfying $A\geq B$ and an integer $r$, does there exist a matrix $M$
with factor rank not exceeding $r$ such that $A\geq M\geq B$. Note that this problem turns to the usual problem of factoring
the matrix $A$ (we further call this problem the \textit{exact factorization}) if $B$ is set to equal $A$. This reduces the
exact factorization problem to the intermediate version; the goal of the present section is to prove the opposite result,
reducing the intermediate to exact factorizations.

We will also be interested in the structure of the matrix $B$ mentioned above. Namely, we will assume that the input
of the intermediate factorization problem is an integer $r$ and a triple of matrices $A\in\Ro^{m\times n}$, $C\in\Ro^{m\times k}$,
and $D\in\Ro^{k\times n}$; the goal will be to find a matrix $M$ with factor rank not exceeding $r$ such that $A\geq M\geq A\op(C\ot D)$.
The following definition will be further shown to provide a reduction to the TMF problem.

\begin{defn}\label{definttropfact}
For matrices $A\in\Ro^{m\times n}$, $C\in\Ro^{m\times k}$, and $D\in\Ro^{k\times n}$,
define the matrix $Q=\Q(A,C,D)$ with rows indexed $1',\ldots,k',1,\ldots,m$ and columns
indexed $1',\ldots,k',1,\ldots,n$ as follows. For $\alpha,\beta\in\{1,\ldots,k\}$, $i\in\{1,\ldots,m\}$,
and $j\in\{1,\ldots,n\}$, we set

(q1) $Q_{\alpha'\beta'}=\i$ if $\alpha\neq\beta$ and $Q_{\alpha'\beta'}=0$ otherwise;

(q2) $Q_{\alpha'j}=D_{\alpha j}$;

(q3) $Q_{i\beta'}=C_{i\beta}$;

(q4) $Q[1,\ldots,m|1,\ldots,n]=A\op(C\ot D)$.

In other words, we define $Q$ as the block matrix $\left(
\begin{array}{c|c}
I_k & D \\\hline
C & A\op(C\ot D) \\
\end{array}
\right)$ with $I_k$ standing for the tropical unity $k$-by-$k$ matrix.
\end{defn}

To prove that Definition~\ref{definttropfact} is a reduction, the following auxiliary lemma is needed.

\begin{lem}\label{leminttropfact1}
Consider matrices $A\in\Ro^{m\times n}$, $C\in\Ro^{m\times k}$, $D\in\Ro^{k\times n}$.
If there are matrices $U\in\Ro^{m\times r}$ and $V\in\Ro^{r\times n}$ satisfying
$A\geq U\ot V\geq A\op(C\ot D)$, then the matrix $Q=\Q(A,C,D)$ has factor rank not
exceeding $k+r$.
\end{lem}

\begin{proof}
Define $C'=\left(\begin{array}{c} I_k\\\hline C \end{array}\right)$ and $D'=(I_k|D)$, where
$I_k$ denotes the $k$-by-$k$ matrix with zeros on the diagonal and $\i$'s everywhere else.
Also define $U'=\left(\begin{array}{c} O_{k\times r}\\\hline U \end{array}\right)$ and $V'=(O_{r\times k}|V)$
with $O_{k\times r}$ standing for the $k$-by-$r$ matrix consisting of $\i$'s. To prove the lemma,
it is sufficient to note that $Q=(U'\ot V')\op(C'\ot D')$.
\end{proof}

Let us prove Lemma~\ref{leminttropfact1} in the opposite direction under a certain additional assumption.

\begin{lem}\label{leminttropfact2}
Consider matrices $A\in\Ro^{m\times n}$, $C\in\Ro^{m\times k}$, $D\in\Ro^{k\times n}$
and assume that a submatrix $A[1,\ldots,r|1,\ldots,r]$ has factor rank equal to $r$.
Assume also for every $\kappa\in\{1,\ldots,k\}$, it holds that either $C_{1\kappa}=\ldots=C_{r\kappa}=\i$
or $D_{\kappa1}=\ldots=D_{\kappa r}=\i$. Assume that the matrix $Q=\Q(A,C,D)$ has factor rank not exceeding $k+r$,
then there is a matrix $M$ of factor rank not exceeding $r$ satisfying $M\op(C\ot D)=A\op(C\ot D)$.
\end{lem}

\begin{proof}
\textit{Step~1.} By the assumption of the lemma, there are matrices $U'\in\Ro^{m\times(k+r)}$ and $V'\in\Ro^{(k+r)\times n}$
satisfying $Q=U'\ot V'$. By $Q^{(\tau)}$ denote the product of the $\tau$th column of $U'$ and the $\tau$th
row of $V'$, then $Q=Q^{(1)}\op\ldots\op Q^{(k+r)}$.

\textit{Step~2.} Up to permutations on $(Q^{(1)},\ldots,Q^{(k+r)})$ we can assume that the submatrix $Q^{(\tau)}[1',\ldots,k'|1',\ldots,k']$
has at least one finite entry if $\tau\leq t_0$ and $Q^{(\tau)}[1',\ldots,k'|1',\ldots,k']$ consists of $\i$'s if $\tau>t_0$.

\textit{Step~3.} The assumption of the lemma and Definition~\ref{definttropfact} imply that for any $\kappa\in\{1,\ldots,k\}$, we
have either $Q_{\kappa'1}=\ldots=Q_{\kappa'r}=\i$ or $Q_{1\kappa'}=\ldots=Q_{r\kappa'}=\i$,
so that for any $\tau$ either $Q^{(\tau)}_{\kappa'1}=\ldots=Q^{(\tau)}_{\kappa'r}=\i$ or
$Q^{(\tau)}_{1\kappa'}=\ldots=Q^{(\tau)}_{r\kappa'}=\i$.
Since $Q^{(\tau)}$ has rank one, we see that if for some $\kappa_1,\kappa_2\in\{1,\ldots,k\}$ the condition
$Q^{(\tau)}_{\kappa_1'\kappa_2'}\neq\i$ holds, then for all $\rho_1,\rho_2\in\{1,\ldots,r\}$ it holds that $Q^{(\tau)}_{\rho_1\rho_2}=\i$.
Steps~2 now implies that $Q[1,\ldots,r|1,\ldots,r]=Q^{(t_0+1)}[1,\ldots,r|1,\ldots,r]\op\ldots\op Q^{(k+r)}[1,\ldots,r|1,\ldots,r]$.

\textit{Step~4.} By the assumption of the lemma $[C\otimes D]_{\rho_1\rho_2}=\i$, for all $\rho_1,\rho_2\in\{1,\ldots,r\}$,
so by Definition~\ref{definttropfact} we have $Q[1,\ldots,r|1,\ldots,r]=A[1,\ldots,r|1,\ldots,r]$. Since the latter matrix
has factor rank $r$ by the assumption of the lemma, we apply Step~3 and obtain $t_0\leq k$.

\textit{Step~5.} By Definition~\ref{definttropfact}, the matrix $Q[1',\ldots,k'|1',\ldots,k']$ has zeros on
the diagonal and $\i$'s everywhere else. Steps~1 and~2 imply that $t_0\geq k$, and the result of Step~4 then implies $t_0=k$.
So up to permutations on $(Q^{(1)},\ldots,Q^{(k)})$, we have that $Q^{(\tau)}_{\tau'\tau'}=0$ is a unique finite entry of the matrix
$Q^{(\tau)}[1',\ldots,k'|1',\ldots,k']$.

\textit{Step~6.} By Step~1 we have $Q^{(\tau)}_{i\tau'}\geq Q_{i\tau'}=C_{i\tau}$ and $Q^{(\tau)}_{\tau'j}\geq Q_{\tau'j}=D_{\tau j}$,
for any $i\in\{1,\ldots,m\}$ and $j\in\{1,\ldots,n\}$. Since $Q^{(\tau)}$ is rank-one, we have
$Q^{(\tau)}_{ij}+Q^{(\tau)}_{\tau'\tau'}=Q^{(\tau)}_{i\tau'}+Q^{(\tau)}_{\tau'j}$, so by the result of Step~5
$Q^{(\tau)}_{ij}\geq C_{i\tau}+D_{\tau j}$. Therefore one obtains
$Q^{(1)}[1,\ldots,m|1,\ldots,n]\op\ldots\op Q^{(k)}[1,\ldots,m|1,\ldots,n]\geq C\ot D$, so that
$Q[1,\ldots,m|1,\ldots,n]\op(C\ot D)=(C\ot D)\op M$ with $$M=Q^{(k+1)}[1,\ldots,m|1,\ldots,n]\op\ldots\op Q^{(k+r)}[1,\ldots,m|1,\ldots,n],$$
or by item~(q4) of Definition~\ref{definttropfact}, $A\op(C\ot D)=M\op(C\ot D)$, from which the lemma follows.
\end{proof}

Now we can show that Definition~\ref{definttropfact} reduces intermediate to exact factorizations,
under certain additional assumptions.

\begin{thr}\label{thrinttropfact2}
Consider matrices $A\in\Ro^{m\times n}$, $C\in\Ro^{m\times k}$, $D\in\Ro^{k\times n}$
and assume that a submatrix $A[1,\ldots,r|1,\ldots,r]$ has factor rank equal to $r$.
Assume that for every $i$ and $j$, either $A_{ij}<[C\ot D]_{ij}$ or $A_{ij}=\i$ holds.
Assume also for every $\tau\in\{1,\ldots,k\}$, it holds that either $C_{1\tau}=\ldots=C_{r\tau}=\i$
or $D_{\tau1}=\ldots=D_{\tau r}=\i$. Then the matrix $Q=\Q(A,C,D)$ has factor rank not exceeding $k+r$
if and only if 
there is a matrix $M\in\Ro^{m\times n}$ with factor rank not exceeding $r$ satisfying $A\geq M\geq A\op(C\ot D)$.
\end{thr}

\begin{proof}
Note that under the assumption that either $A_{ij}<[C\ot D]_{ij}$ or $A_{ij}=\i$ holds,
the condition $M\op(C\ot D)=A\op(C\ot D)$ is equivalent to that $A\geq M\geq A\op(C\ot D)$.
Then the theorem follows from Lemmas~\ref{leminttropfact1} and~\ref{leminttropfact2}.
\end{proof}

\section{Reducing graph colorability to intermediate factorizations}\label{secgraphcol}

This is a key section in our NP-completeness proof for the $7$-TMF problem. We will define the
classical NP-complete problem of the graph colorability and then reduce it to the problem of intermediate
tropical factorizations. The problem from which our reduction goes has been considered in~\cite{GJ} and is
formulated as follows.

\begin{prob}\label{probgr3col} GRAPH $3$-COLORABILITY.

Given a simple graph $G$ with vertex set $\{1,\ldots,n\}$ and edges $\{h_1,t_1\},\ldots,\{h_m,t_m\}$.

Question: Does there exist a function $\varphi:\{1,\ldots,n\}\rightarrow\{1,2,3\}$ such that $\varphi(h_j)\neq\varphi(t_j)$ for every $j\in\{1,\ldots,m\}$?
\end{prob}

We will say that the graph $G$ is \textit{$3$-colorable} if the answer in Problem~\ref{probgr3col} is 'yes'.
The following is the classical result of complexity theory.

\begin{thr}\cite{GJ}
Problem~\ref{probgr3col} is NP-complete.
\end{thr}

The question in Problem~\ref{probgr3col} is trivially answered with 'yes' if $G$ has no edges,
so it remains NP-complete if $G$ has at least one edge. Therefore, we can assume in what follows
that the input edges in Problem~\ref{probgr3col} are encoded as a tuple $(h_1,t_1,\ldots,h_m,t_m)$
with $m>0$. We will also call $h_i$ the \textit{head} and $t_i$ the \textit{tail} of the $i$th edge of $G$.

We will reduce Problem~\ref{probgr3col} to the problem of finding a matrix $M$
with factor rank at most three satisfying $A\geq M\geq A\op B$, for input matrices $A$ and $B$.
To start describing our reduction, let us define the set indexed with vertices of $G$ as $$\Vv=\{1\vv,\ldots,n\vv\},$$
and the two sets assigned to the edge set as $$\Hh=\{1\hh,\ldots,m\hh\}\,\,and\,\,\Tt=\{1\tt,\ldots,m\tt\}.$$

\begin{defn}\label{defA}
Define the matrix $\A=\A(G)$ as follows. Let $\{1,2,3\}\cup\Vv\cup\Hh\cup\Tt$ be the set of row indexes,
and $\{1,2,3\}\cup\Hh\cup\Tt$ the set of column indexes. For $\chi\in\{1,2,3\}$, $i\in\{1,\ldots,n\}$, and
$j\in\{1,\ldots,m\}$, we define

(a1) $\A_{\chi\psi}=0$ if $\psi$ from $\{1,2,3\}$ is different from $\chi$;

(a2) $\A_{i\vv, \chi}=0$;

(a3) $\A_{j\hh,j\hh}=40j+1$;

(a4) $\A_{j\hh,j\tt}=\A_{j\tt,j\hh}=40j+9$;

(a5) the entries of $\A$ not defined in items (a1)--(a4) are equal to $\infty$.
\end{defn}

We proceed with the definition of the matrix $\B$.

\begin{defn}\label{defB}
Define the matrix $\B=\B(G)$ as follows. Let $\{1,2,3\}\cup\Vv\cup\Hh\cup\Tt$ be the set of row indexes,
and $\{1,2,3\}\cup\Hh\cup\Tt$ the set of column indexes. Set $H=40m+21$ and for $\psi,\chi\in\{1,2,3\}$,
$i\in\{1,\ldots,n\}$, and $g,j\in\{1,\ldots,m\}$, define

(b1) $\B_{\psi\chi}=\B_{i\vv\chi}=\i$;

(b2) $\B_{\psi j\hh}=\B_{\psi j\tt}=\B_{j\hh \psi}=\B_{j\tt\psi}=0$;

(b3) $\B_{i\vv j\hh}=\left(1-2|i-h_j|\right)H$ and $\B_{i\vv j\tt}=\left(1-2|i-t_j|\right)H$;

(b4) $\B_{g\hh j\hh}=\B_{g\hh j\tt}=\B_{g\tt j\hh}=\B_{g\tt j\tt}=40\min\{g,j\}+20$.
\end{defn}

Throughout the rest of our paper, we write simply 'item~(a1)', $\ldots$, 'item~(a5)' when we need to refer to those items from Definition~\ref{defA};
we adopt the similar convention for Definition~\ref{defB}. As in Definition~\ref{defB}, $H$ will always stand for the number $40m+21$.
We will need the following easy fact on the matrices introduced.

\begin{observ}\label{observ1}
For any indices $\alpha$ and $\beta$, we have $\A_{\alpha\beta}<\B_{\alpha\beta}$ if $\A_{\alpha\beta}<\i$.
\end{observ}

We are now going to prove the relations between the matrices introduced and the GRAPH $3$-COLORABILITY problem.
Namely, we want to show that a graph $G$ admits a $3$-coloring if and only if
\begin{equation}\label{eqcoloring}\A(G)\geq M\geq \A(G)\op\B(G)\end{equation}
holds for some matrix $M$ of factor rank not exceeding three. We start with the 'only if' part of this statement.

\begin{lem}\label{lemonlyif}
Let $G$ be a $3$-colorable graph. Then there is a matrix $M$ of factor rank
not exceeding $3$ that satisfies~\eqref{eqcoloring}.
\end{lem}

\begin{proof}
By the assumption, there is a function $\varphi:\{1,\ldots,n\}\rightarrow\{1,2,3\}$ such that $\varphi(h_j)\neq\varphi(t_j)$, for all $j\in\{1,\ldots,m\}$.
Define the matrix $U$ with rows indexed $\{1,2,3\}\cup\Vv\cup\Hh\cup\Tt$ and columns indexed $\{1,2,3\}$ as follows. For $\psi,\chi\in\{1,2,3\}$,
$i\in\{1,\ldots,n\}$, and $j\in\{1,\ldots,m\}$, we set

(u1) $U_{\psi\chi}=\i$ if $\psi\neq\chi$ and $U_{\psi\chi}=0$ if $\psi=\chi$;

(u2) $U_{i\vv \chi}=\i$ if $\varphi(i)=\chi$ and $U_{i\vv \chi}=0$ if $\varphi(i)\neq\chi$;

(u3) $U_{j\hh,\varphi(h_j)}=20j$, $U_{j\hh,\varphi(t_j)}=20j+8$, and $U_{j\hh,\chi}=\i$ if $\chi\notin\{\varphi(h_j),\varphi(t_j)\}$;

(u4) $U_{j\tt,\varphi(h_j)}=20j+8$ and $U_{j\tt,\chi}=\i$ if $\chi\neq\varphi(h_j)$.

To define the matrix $V$ with rows indexed $\{1,2,3\}$ and columns indexed $\{1,2,3\}\cup\Hh\cup\Tt$,
for $\psi,\chi\in\{1,2,3\}$, $i\in\{1,\ldots,n\}$, and $j\in\{1,\ldots,m\}$, we set

(v1) $V_{\psi\chi}=0$ if $\psi\neq\chi$ and $V_{\psi\chi}=\i$ if $\psi=\chi$;

(v2) $V_{\psi j\hh}=20j+1$ if $\psi=\varphi(h_j)$ and $V_{\psi j\hh}=\i$ if $\psi\neq\varphi(h_j)$;

(v3)  $V_{\psi j\tt}=20j+1$ if $\psi=\varphi(t_j)$ and $V_{\psi j\tt}=\i$ if $\psi\neq\varphi(t_j)$.

Let us now check that matrices $U$ and $V$ satisfy~\eqref{eqcoloring}, that is,
\begin{equation}\label{eqcoloring153}\A_{\alpha\beta}\geq \min_{\chi=1}^3 \{U_{\alpha\chi}+V_{\chi\beta}\}
\geq\min\{\A_{\alpha\beta},\B_{\alpha\beta}\}\end{equation}
holds for any $\alpha\in\{1,2,3\}\cup\Vv\cup\Hh\cup\Tt$ and $\beta\in\{1,2,3\}\cup\Hh\cup\Tt$.

\textit{Step~1.} When $(\alpha,\beta)\in\{1,2,3\}\times\left(\Hh\cup\Tt\right)$ or $(\alpha,\beta)\in\left(\Hh\cup\Tt\right)\times\{1,2,3\}$,
then we have $\A_{\alpha\beta}=\i$ by item (a5) and $\B_{\alpha\beta}=0$ by item (b2).
The condition~\eqref{eqcoloring153} is then immediate since the entries of $U$ and $V$ are nonnegative.

\textit{Step~2.} To check~\eqref{eqcoloring153} for $\alpha\in\{1,2,3\}\cup\Vv$ and $\beta\in\{1,2,3\}$,
it is enough to prove that $$\A[1,2,3,i\vv|1,2,3]=U[1,2,3,i\vv|1,2,3]\ot V[1,2,3|1,2,3],$$
that is,
$$\left(\begin{array}{ccc} \i & 0 &0\\ 0 & \i &0\\ 0& 0 & \i\\0&0&0 \end{array}\right)=
\left(\begin{array}{ccc} 0 & \i &\i\\ \i &0 &\i\\ \i& \i &0\\U_{i\vv1}&U_{i\vv2}&U_{i\vv3} \end{array}\right)\ot
\left(\begin{array}{ccc} \i & 0 &0\\ 0 & \i &0\\ 0& 0 & \i \end{array}\right),$$
which is true because by item~(u2) the two entries of $U_{i\vv1},U_{i\vv2},U_{i\vv3}$ equal zero.

\textit{Step~3.} Note that by item (a5) $\A_{i\vv j\hh}=\i$. To check~\eqref{eqcoloring153} for $\alpha=i\vv$ and $\beta=j\hh$,
it is therefore enough to prove for any $\chi$ that $U_{i\vv \chi}+V_{\chi j\hh}\geq \B_{i\vv j\hh}$. This is easy when $h_j\neq i$
because then $\B_{i\vv j\hh}<0$ by item (b3). On the other hand, if $h_j=i$, then by items~(u2) and~(v2) we have $U_{i\vv \chi}+V_{\chi j\hh}=\i$.

Similarly, for $\alpha=i\vv$ and $\beta=j\tt$, it is enough to prove for any $\chi$ that $U_{i\vv \chi}+V_{\chi j\tt}\geq \B_{i\vv j\tt}$.
If $t_j\neq i$ we have again $\B_{i\vv j\tt}<0$, so we are done; on the other hand, if $t_j=i$,
items~(u2) and~(v3) again imply $U_{i\vv \chi}+V_{\chi j\hh}=\i$.

Thus we have checked~\eqref{eqcoloring153} for any $(\alpha,\beta)\in\Vv\times\left(\Hh\cup\Tt\right)$.

\textit{Step~4.} Now assume $\alpha\in\{g\hh,g\tt\}$ and $\beta\in\{j\hh,j\tt\}$.

4.1. Then we have by items~(u3) and~(u4) that $U_{\alpha\chi}\geq 20g$, and by items~(v2)
and~(v3) that $V_{\chi\beta}\geq 20j+1$, for $\chi\in\{1,2,3\}$.

4.2. Assume $g\neq j$. Then we have $\A_{\alpha\beta}=\i$ by item (a5) and by paragraph~4.1
$U_{\alpha\chi}+V_{\chi\beta}\geq40\min\{j,g\}+21$, that is, $U_{\alpha\chi}+V_{\chi\beta}> B_{\alpha\beta}$ by item (b4).
The latter inequality shows that~\eqref{eqcoloring153} holds.

4.3. Assume $\alpha=\beta=g\tt$. Then we have $U_{\alpha\chi}+V_{\chi\beta}=\i$ by items~(u4) and~(v3) since $\varphi(h_j)\neq\varphi(t_j)$.
Also $\A_{\alpha\beta}=\i$ in this case, so~\eqref{eqcoloring153} holds.

4.4. If $g=j$ but the assumption of item~4.3 does not hold, then we check that $\A_{\alpha\beta}=\min_{\chi=1}^3 \{U_{\alpha\chi}+V_{\chi\beta}\}$
and conclude that~\eqref{eqcoloring153} holds.

Items 4.2--4.4 cover all the possibilities within Step~4, so we have checked~\eqref{eqcoloring153}
for any $(\alpha,\beta)\in\left(\Hh\cup\Tt\right)\times\left(\Hh\cup\Tt\right)$. Finally, note that
Steps~1--4 cover all the possibilities for $(\alpha,\beta)$, so our proof is complete.
\end{proof}

To prove the result of Lemma~\ref{lemonlyif} in the opposite direction, we need the following auxiliary statement.

\begin{lem}\label{lem000}
Assume the columns of a matrix $V\in\Ro^{3\times v}$ indexed $1$, $2$, and $3$ form a submatrix
$\left(\begin{smallmatrix}0 & \i &\i\\ \i & 0 &\i\\ \i& \i & 0\end{smallmatrix}\right)$.
Let $U\in\Ro^{u\times3}$ and assume that the matrix $U\ot V$ has nonnegative elements
in columns indexed $1$, $2$, and $3$. If there is a $g$ satisfying $[U\ot V]_{g\chi}=0$
for any $\chi\in\{1,2,3\}$, then $[U\ot V]_{ij}\geq[U\ot V]_{gj}$ holds for any indexes $i$ and $j$.
\end{lem}

\begin{proof}
The assumption of the lemma implies
$$\left(U_{i1}\ot(0\, \i \,\i)\right)\op\left(U_{i2}\ot(\i\,0\,\i)\right)\op\left(U_{i3}\ot(\i\,\i\,0)\right)\geq(0\,0\,0)$$
with equality if $i=g$. So we see that $U_{g\chi}=0$ and $U_{i\chi}\geq0$, for any $i$ and $\chi$. Thus we have
$\min_{\chi=1}^3\{U_{i\chi}+V_{\chi j}\}\geq\min_{\chi=1}^3\{U_{g\chi}+V_{\chi j}\}$, for any $i$ and $j$, which is the result we need.
\end{proof}

We need another technical lemma. A tropical matrix $A\in\Ro^{k\times k}$ is \textit{monomial}
if there is a permutation $\sigma$ on $\{1,\ldots,k\}$ such that $A_{ij}=\i$ if and only if $j\neq\sigma(i)$.

\begin{lem}\label{lemeasy0}
If a tropical $3$-by-$k$ matrix $U'$ and a tropical $k$-by-$3$ matrix $V'$ satisfy
$U'\ot V'=\left(\begin{smallmatrix} \i & 0 &0\\ 0 & \i &0\\ 0& 0 & \i \end{smallmatrix}\right)$,
then $k\geq3$. Moreover, if $k=3$, then then either $U'$ or $V'$ is monomial.
\end{lem}

\begin{proof}
Let $\mathrm{Sup}(U',i)$ denote the set of all $j$ satisfying $U'_{ij}\neq\i$. By definition of tropical multiplication,
the condition $\mathrm{Sup}(U',i)\subset\mathrm{Sup}(U',i')$ implies $\mathrm{Sup}(U'\ot V',i)\subset\mathrm{Sup}(U'\ot V',i')$.
So we see that $\mathrm{Sup}(U',1)$, $\mathrm{Sup}(U',2)$, $\mathrm{Sup}(U',3)$ is an antichain under inclusion. This proves the
first assertion of the lemma, and in the case of $k=3$ one can also note that either $|\mathrm{Sup}(U',i)|=1$ for all $i$ or
$|\mathrm{Sup}(U',i)|=2$ for all $i$. In the former case, $U'$ is monomial, and in the latter case $V'$ can be checked to be monomial.
\end{proof}

Let us find the factor rank of an important submatrix of $\A(G)$.

\begin{lem}\label{lemsubA}
The factor rank of the submatrix $\A[1,2,3|1,2,3]$ equals $3$.
\end{lem}

\begin{proof}
By items~(a1) and~(a5), the submatrix discussed equals $\left(\begin{smallmatrix} \i & 0 &0\\ 0 & \i &0\\ 0& 0 & \i \end{smallmatrix}\right)$,
so the result follows from Lemma~\ref{lemeasy0}.
\end{proof}

Let us use Lemmas~\ref{lem000} and~\ref{lemeasy0} to obtain a deeper characterization.

\begin{lem}\label{lem111}
Assume that there is a matrix $M$ of factor rank three satisfying~\eqref{eqcoloring}.
Then there is such $M$ of the form $U\ot V$ where the submatrix of $V$ formed by the columns indexed $1$, $2$, $3$
equals $A'=\left(\begin{smallmatrix} \i & 0 &0\\ 0 & \i &0\\ 0& 0 & \i \end{smallmatrix}\right)$.
\end{lem}

\begin{proof}
Denote by $V'$ the submatrix of $V$ formed by the columns indexed $1$, $2$, $3$, and by
$U'$ the submatrix of $U$ formed by the rows indexed $1$, $2$, $3$.

\textit{Step~1.} We can add a real $r$ to every element of $i$th column of $U$ and
$-r$ to the $i$th row of $V$ without changing the product $U\ot V$. Similarly, we can act with
a permutation $\sigma$ simultaneously on the column labels of $U$ and on the row labels of $V$
without changing the product $U\ot V$.

\textit{Step~2.} By item (b1) we have $B_{\chi\psi}=\i$ for $\chi,\psi\in\{1,2,3\}$,
and by items (a1) and (a5) the matrix formed by the rows and columns of $A$ indexed $1$, $2$, $3$ equals $A'$.
Therefore, the matrix $U'\ot V'$ equals $A'$ by~\eqref{eqcoloring} as well.

\textit{Step~3.} If the submatrix $U'$ is monomial, then we apply Step~1 and conclude that $U'$
equals $\left(\begin{smallmatrix}0 & \i &\i\\ \i &0 &\i\\ \i& \i &0 \end{smallmatrix}\right)$
the tropical unity matrix. Then $V'=U'\ot V'$, which is $A'$ by Step~3, so we are done.

\textit{Step~4.} Assume the submatrix $V'$ is monomial. Then we apply Step~1 again and conclude that
$V'$ equals $\left(\begin{smallmatrix}0 & \i &\i\\ \i &0 &\i\\ \i& \i &0 \end{smallmatrix}\right)$.
Further, by items (a2) and (b1), we have $A_{i\vv\chi}=0$ and $B_{i\vv\chi}=\i$ for any $\chi\in\{1,2,3\}$; so that
by~\eqref{eqcoloring} we have $[U\ot V]_{i\vv\chi}=0$. Apply Lemma~\ref{lem000} to conclude that
$[U\ot V]_{1\hh 1\hh}\geq[U\ot V]_{h_1\vv 1\hh}$. From~\eqref{eqcoloring} it then follows
$[U\ot V]_{1\hh 1\hh}\geq \B_{h_1\vv 1\hh}$, and then from item (b3) $[U\ot V]_{1\hh 1\hh}\geq H$.
This is a contradiction with~\eqref{eqcoloring}
because by item (a3) $A_{1\hh 1\hh}=41<H$, so the assumption of Step~4 is not realizable.

Since $U'\ot V'=A'$ by Step~2, Lemma~\ref{lemeasy0} shows that Steps~3 and~4 cover all the possibilities and complete the proof.
\end{proof}

We are now ready to prove the result opposite to that of Lemma~\ref{lemonlyif}.

\begin{lem}\label{lemifif}
Let a graph $G$ be not $3$-colorable. Then no matrix $M$ of factor rank at most $3$ satisfies~\eqref{eqcoloring}.
\end{lem}

\begin{proof}
Assume the converse. Then there are matrices $U$ with rows indexed $\{1,2,3\}\cup\Vv\cup\Hh\cup\Tt$ and columns indexed $\{1,2,3\}$
and $V$ with rows indexed $\{1,2,3\}$ and columns indexed $\{1,2,3\}\cup\Hh\cup\Tt$ satisfying
\begin{equation}\label{eqcoloring2}\A(G)\geq U\ot V\geq \A(G)\op\B(G).\end{equation}

\textit{Step~1.} By Lemma~\ref{lem111} we can assume that the submatrix of $V$ formed by the columns indexed $1$, $2$, $3$
equals $\left(\begin{smallmatrix} \i & 0 &0\\ 0 & \i &0\\ 0& 0 & \i \end{smallmatrix}\right)$.

\textit{Step~2.} If the row of $U$ with index $\alpha$ contains a negative entry, then by Step~1, there is a $\psi\in\{1,2,3\}$
such that $[U\ot V]_{\alpha\psi}<0$. This contradicts~\eqref{eqcoloring2} because both $\A$ and $\B$ have nonnegative numbers
in columns indexed $1$, $2$, and $3$. Therefore, the entries of $U$ are nonnegative.

\textit{Step~3.} Fix an $i\in\{1,\ldots,n\}$. For any $\chi\in\{1,2,3\}$, we then have $\A_{i\vv\chi}=0$ by item (a2)
and $\B_{i\vv\chi}=\i$ by item (b1).
Now~\eqref{eqcoloring2} implies that $[U\ot V]_{i\vv\chi}=0$. Taking into an account the result of Step~1, one has
$$\left(U_{i\vv1}\ot(\i\,0 \,0)\right)\op\left(U_{i\vv2}\ot(0\,\i\,0)\right)\op\left(U_{i\vv3}\ot(0\,0\,\i)\right)=(0\,0\,0).$$

\textit{Step~4.} The result of Step~3 implies that $U_{i\vv\psi'}=U_{i\vv\psi''}=0$, for some different $\psi'$ and $\psi''$ from $\{1,2,3\}$.
Denote by $\varphi(i)$ the element of $\{1,2,3\}\setminus\{\psi',\psi''\}$. Note that $\varphi$ is a function from $\{1,\ldots,n\}$ to $\{1,2,3\}$.

\textit{Step~5.} Since the graph $G$ is not $3$-colorable, $G$ has an edge $\{h_j,t_j\}$ satisfying $\varphi(h_j)=\varphi(t_j)$;
we take $\{\chi_1,\chi_2\}=\{1,2,3\}\setminus\{\varphi(h_j)\}$.

\textit{Step~6.} Now let us note that $\A_{h_j\vv j\hh}=\i$ by item (a5) and $\B_{h_j\vv j\hh}=H$ by item (b3). Therefore~\eqref{eqcoloring2} implies
$$U_{h_j\vv \chi_1}+V_{\chi_1 j\hh}\geq H.$$ By the results of Steps~4 and~5, $U_{h_j\vv \chi_1}=0$, so that $V_{\chi_1 j\hh}\geq H.$
Similarly we have $V_{\chi_2 j\hh}\geq H.$

\textit{Step~7.} Similarly to Step~6, one has $\A_{t_j\vv j\tt}=\i$ and $\B_{t_j\vv j\tt}=H$. Therefore~\eqref{eqcoloring2} implies
$$U_{t_j\vv \chi_1}+V_{\chi_1 j\tt}\geq H.$$ By the results of Steps~4 and~5, $U_{t_j\vv \chi_1}=0$, so that $V_{\chi_1 j\tt}\geq H$,
and similarly $V_{\chi_2 j\tt}\geq H.$

\textit{Step~8.} Now let us restrict our attention to the submatrix of $U\ot V$ formed by the rows and columns indexed $j\hh$ and $j\tt$.
By~\eqref{eqcoloring2} we have
$$\left(
\begin{array}{cc}
40j+1 & 40j+9 \\
40j+9 & \i \\
\end{array}
\right)
\geq
\left(
\begin{array}{ccc}
U_{j\hh \chi_1} & U_{j\hh \chi_2} & U_{j\hh \varphi(h_j)} \\
U_{j\tt \chi_1} & U_{j\tt \chi_2} & U_{j\tt \varphi(h_j)} \\
\end{array}
\right)
\ot
\left(
\begin{array}{cc}
V_{\chi_1 j\hh} & V_{\chi_1 j\tt} \\
V_{\chi_2 j\hh} & V_{\chi_2 j\tt} \\
V_{\varphi(h_j) j\hh} & V_{\varphi(h_j) j\tt}\\
\end{array}
\right)\geq$$
$$\geq\left(
\begin{array}{cc}
40j+1 & 40j+9 \\
40j+9 & 40j+20 \\
\end{array}
\right).$$

\textit{Step~9.} By Steps~2 and~6, one has
$$\min\{U_{j\hh \chi_1}+V_{\chi_1 j\hh},U_{j\hh \chi_2}+V_{\chi_2 j\hh},U_{j\tt \chi_1}+V_{\chi_1 j\hh},U_{j\tt \chi_2}+V_{\chi_2 j\hh}\}\geq H.$$
From Steps~2 and~7 it also follows that
$$\min\{U_{j\hh \chi_1}+V_{\chi_1 j\tt},U_{j\hh \chi_2}+V_{\chi_2 j\tt},U_{j\tt \chi_1}+V_{\chi_1 j\tt},U_{j\tt \chi_2}+V_{\chi_2 j\tt}\}\geq H.$$

\textit{Step~10.} Noting that $H>40j+20$ by definition, we combine the results of Steps~8 and~9. Then we see
that the tropical rank-one matrix $$M'=\left(
\begin{array}{ccc}
U_{j\hh \varphi(h_j)} \\
U_{j\tt \varphi(h_j)} \\
\end{array}
\right)
\ot
\left(
\begin{array}{cc}
V_{\varphi(h_j) j\hh} & V_{\varphi(h_j) j\tt}\\
\end{array}
\right)$$
satisfies
$$\left(
\begin{array}{cc}
40j+1 & 40j+9 \\
40j+9 & \i \\
\end{array}
\right)
\geq
M'
\geq\left(
\begin{array}{cc}
40j+1 & 40j+9 \\
40j+9 & 40j+20 \\
\end{array}
\right),$$
which is a contradiction.
\end{proof}

The following theorem outlines the results of this section.

\begin{thr}\label{thrifonlyif}
A graph $G$ is $3$-colorable if and only if there is a matrix $M$ of factor rank at most $3$ satisfying $\A(G)\geq M\geq \A(G)\op\B(G)$.
\end{thr}

\begin{proof}
Follows immediately from Lemmas~\ref{lemonlyif} and~\ref{lemifif}.
\end{proof}

\section{Factoring the matrix $\B(G)$}\label{secfactB}

To finalize the NP-completeness proof, we want to combine the results of Theorems~\ref{thrinttropfact2}
and~\ref{thrifonlyif}. However, Theorem~\ref{thrinttropfact2} requires the matrix $\B$ to be presented
as a product of two tropical matrices, so we can not apply this theorem directly. The goal of this section
is to provide such a factorization for the matrix $\B(G)$. We start with the definition of the one of the factors.

\begin{defn}\label{defC}
Define the matrix $\C=\C(G)$ as follows. Let $\{1,2,3\}\cup\Vv\cup\Hh\cup\Tt$ be the set of row indexes,
and $\{1,2,3,4\}$ the set of column indexes. Take

(c1) $\C_{\alpha1}=0$ for $\alpha\in\Hh\cup\Tt$, and $\C_{\alpha1}=\i$ otherwise;

(c2) $\C_{\chi2}=0$ for $\chi\in\{1,2,3\}$ and $\C_{i\vv2}=\i$ for $i\in\{1,\ldots,n\}$;

(c3) $\C_{j\hh2}=\C_{j\tt2}=40j+20$ for $j\in\{1,\ldots,m\}$;

(c4) $\C_{\alpha3}=(2i+1)H$ if $\alpha=i\vv$, and $\C_{\alpha3}=\i$ otherwise;

(c5) $\C_{\alpha4}=(2n+1-2i)H$ if $\alpha=i\vv$, and $\C_{\alpha4}=\i$ otherwise.
\end{defn}

We proceed with the definition of the other factor.

\begin{defn}\label{defD}
Define the matrix $\D=\D(G)$ as follows. Let $\{1,2,3,4\}$ be the set of row indexes,
and $\{1,2,3\}\cup\Hh\cup\Tt$ the set of column indexes. Take

(d1) $\D_{1\chi}=0$ for $\chi\in\{1,2,3\}$;

(d2) $\D_{1j\hh}=\D_{1j\tt}=40j+20$ for $j\in\{1,\ldots,m\}$;

(d3) $\D_{2\chi}=\D_{3\chi}=\D_{4\chi}=\i$ for $\chi\in\{1,2,3\}$;

(d4) $\D_{2j\hh}=\D_{2j\tt}=0$ for $j\in\{1,\ldots,m\}$;

(d5) $\D_{3j\hh}=-2h_jH$ and $\D_{3j\tt}=-2t_jH$ for $j\in\{1,\ldots,m\}$;

(d6) $\D_{4j\hh}=2(h_j-n)H$ and $\D_{4j\tt}=2(t_j-n)H$ for $j\in\{1,\ldots,m\}$.
\end{defn}

Let us point out an easy fact on the matrices introduced.

\begin{observ}\label{observ2}
For every $\tau\in\{1,\ldots,4\}$, it holds that either $\C_{1\tau}=\C_{2\tau}=\C_{3\tau}=\i$
or $\D_{\tau 1}=\D_{\tau 2}=\D_{\tau 3}=\i$.
\end{observ}

Let us now show that $\C(G)$ and $\D(G)$ provide the factorization of the matrix $\B(G)$ from Definition~\ref{defB}.

\begin{lem}\label{lemBfact}
Given a graph $G$, it holds that $\B(G)=\C(G)\otimes \D(G)$.
\end{lem}

\begin{proof}
Consider the five possibilities for $\alpha\in\{1,2,3\}\cup\Vv\cup\Hh\cup\Tt$ and $\beta\in\{1,2,3\}\cup\Hh\cup\Tt$.

\textit{Step~1.} For $\beta\in\{1,2,3\}$, we have $\D_{\tau\beta}=0$ if $\tau=1$ and $\D_{\tau\beta}=\i$ otherwise.
Therefore $[\C\ot \D]_{\alpha\beta}=\C_{\alpha1}$, that is, $[\C\ot \D]_{\alpha\beta}=0$ if $\alpha\in\Hh\cup\Tt$ and $[\C\ot \D]_{\alpha\beta}=\i$ otherwise.
Checking items (b1) and (b2), we obtain $[\C\ot \D]_{\alpha\beta}=\B_{\alpha\beta}$.

\textit{Step~2.} For $\alpha\in\{1,2,3\}$ and $\beta\in\Hh\cup\Tt$, we have $\C_{\alpha\tau}=0$ if $\tau=2$ and $\C_{\alpha\tau}=\i$ otherwise.
Therefore $[\C\ot \D]_{\alpha\beta}=\D_{2\beta}=0$. On the other hand, $\B_{\alpha\beta}$ in this case equals $0$ as well by item (b2).

\textit{Step~3.} For $\alpha,\beta\in\Hh\cup\Tt$, we have $\alpha\in\{g\hh,g\tt\}$ and $\beta\in\{j\hh,j\tt\}$, for some $g,j\in\{1,\ldots,m\}$.
In this case we have $\C_{\alpha3}=\C_{\alpha4}=\i$, so that
$[\C\ot \D]_{\alpha\beta}=\min\{\C_{\alpha1}+\D_{1\beta},\C_{\alpha2}+\D_{2\beta}\}=\min\{40g+20,40j+20\}$.
Comparing the latter result with item (b4), we obtain $[\C\ot \D]_{\alpha\beta}=\B_{\alpha\beta}$.

\textit{Step~4.} Assume $\alpha=i\vv$ and $\beta=j\hh$, for some $i\in\{1,\ldots,n\}$ and $j\in\{1,\ldots,m\}$.
In this case one has $\C_{\alpha1}=\C_{\alpha2}=\i$, so that
$$[\C\ot \D]_{\alpha\beta}=\min\{\C_{\alpha3}+\D_{3\beta},\C_{\alpha4}+\D_{4\beta}\}=\min\{(2i-2h_j+1)H,(2h_j-2i+1)H\}=H-2|i-h_j|H.$$
Comparing the latter result with item (b3), we obtain $[\C\ot \D]_{\alpha\beta}=\B_{\alpha\beta}$.

\textit{Step~5.} Assume $\alpha=i\vv$ and $\beta=j\tt$, for some $i\in\{1,\ldots,n\}$ and $j\in\{1,\ldots,m\}$.
In this case one has $\C_{\alpha1}=\C_{\alpha2}=\i$, so that
$$[\C\ot \D]_{\alpha\beta}=\min\{\C_{\alpha3}+\D_{3\beta},\C_{\alpha4}+\D_{4\beta}\}=\min\{(2i-2t_j+1)H,(2t_j-2i+1)H\}=H-2|i-t_j|H.$$
Comparing the latter result with item (b3), we obtain $[\C\ot \D]_{\alpha\beta}=\B_{\alpha\beta}$.

Steps~1--5 cover all the possibilities for indexes $\alpha$ and $\beta$, so we have $\C\ot \D=\B$.
\end{proof}

We finalize the section with a corollary of lemma proven.

\begin{cor}\label{corobserv}
Given a graph $G$. For any indices $\alpha$ and $\beta$, we have $\A_{\alpha\beta}<[\C(G)\otimes \D(G)]_{\alpha\beta}$ if $\A_{\alpha\beta}<\i$.
\end{cor}

\begin{proof}
Follows immediately from Observation~\ref{observ1} and Lemma~\ref{lemBfact}.
\end{proof}

\section{Main results}\label{secmain}

Now we are ready to put our results together and prove that the $7$-TMF problem is NP-complete.

\begin{thr}\label{7-tmf}
The $7$-TMF problem is NP-complete.
\end{thr}

\begin{proof}
Assume $G$ is a simple graph and consider matrices $\A(G)$ from Definition~\ref{defA}, $\C(G)$ from Definition~\ref{defC},
and $\D(G)$ from Definition~\ref{defD}. Then Lemma~\ref{lemsubA}, Observation~\ref{observ2}, and Corollary~\ref{corobserv} show that these matrices satisfy the
assumptions of Theorem~\ref{thrinttropfact2}. So we see that the matrix $Q(G)=\Q(\A(G),\C(G),\D(G))$ has factor
rank not exceeding $7$ if and only if there are matrices $U\in\Ro^{m\times 3}$ and $V\in\Ro^{3\times n}$ satisfying
$\A(G)\geq U\ot V\geq \A(G)\op(\C(G)\ot \D(G))$. By Lemma~\ref{lemBfact}, $\B(G)=\C(G)\ot \D(G)$, so by Theorem~\ref{thrifonlyif}
$Q(G)$ has factor rank not exceeding $7$ if and only if $G$ is $3$-colorable. Clearly, the matrix $Q(G)$ can be constructed
in polynomial time, given a graph $G$. We finally perform an appropriate scaling and apply Lemma~\ref{Ro->R}, thus giving a
polynomial transformation from Problem~\ref{probgr3col} to $7$-TMF. That $7$-TMF is in NP follows from Theorem~\ref{NP-k-tmf}.
\end{proof}

As a corollary, we can show that the $k$-TMF problem is NP-complete for any integer $k$ greater than or equal to $7$.

\begin{thr}\label{88-tmf}
The $k$-TMF problem is NP-complete for any integer $k\geq7$.
\end{thr}

\begin{proof}
Given a matrix $M$, we use Proposition~\ref{block} to construct a matrix $M'$ which has factor rank not exceeding $k$
if and only if the factor rank of $M$ does not exceed $7$. To finalize a reduction from $7$-TMF to $k$-TMF, it remains
to perform an appropriate scaling on $M'$ and then use Lemma~\ref{Ro->R}.
\end{proof}

%
%

Another result that can be obtained as a corollary of Theorem~\ref{7-tmf} gives an answer for Question~\ref{que3}.
In contrast with the situation of the Gondran--Minoux rank, which can be computed in polynomial time if a given
matrix has bounded tropical rank~\cite{our}, Question~\ref{que3} is answered in the negative.

\begin{thr}\label{troprankhard}
Given a matrix $A$ of tropical rank at most $7$, it is NP-hard to decide whether the factor rank of $A$ is at most $7$.
\end{thr}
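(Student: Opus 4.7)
My plan is to verify that the polynomial-time reduction from Problem~\ref{ssref} to $8$-TMF underlying Theorem~\ref{8-tmf} already outputs matrices of tropical rank at most $8$, regardless of whether the input instance of Problem~\ref{ssref} is a YES or NO instance. Once this is established, the very same reduction proves NP-hardness of the restricted problem ``does $A$ have factor rank at most $8$?'' on inputs promised to have tropical rank at most $8$, because Problem~\ref{ssref} is NP-complete by Lemma~\ref{ssrefnpc} and the final scaling step via Lemma~\ref{Ro->R} preserves both rank functions.

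The tropical-rank bound is immediate in the YES case: by Lemma~\ref{fromlemma} the factor rank of $A(\Sigma,\Rcal)$ is at most $8$, and tropical rank never exceeds factor rank (see~\cite{DSS}). For the NO case I would argue that every $9\times 9$ submatrix of $A(\Sigma,\Rcal)$ has factor rank at most $8$, which forces that submatrix to be tropically singular, and therefore bounds the tropical rank of the whole matrix by $8$. Since the row index set $\Ical$ is partitioned into the blocks of $\pi$-, $\bu$-, and $\ch$-rows, and the column index set $\Jcal$ into $\be$- and $\di$-columns, I split the analysis into cases depending on how many indices are selected from each block. In each case I plan to exhibit an explicit $8$-column factorization of the submatrix over $\Ro$ by reusing, with localized modifications, the building blocks~(\ref{B[..|..]}) and~(\ref{C[..|..]}) from Lemma~\ref{fromlemma}. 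The intuition is that each $9\times 9$ submatrix omits at least one coordinate involved in the global obstruction exploited in the proof of Lemma~\ref{tolemma}, so the partitioning condition on $\Phi_1,\Phi_2$ can be bypassed locally.

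The main obstacle will be the $9\times 9$ submatrices that live largely inside the fixed block~(\ref{A[..|..]}): these must either be shown to admit an $8$-term factorization, or checked tropically singular by a direct computation of the tropical permanent on the finite list of shapes that arise. Once the tropical rank of $A(\Sigma,\Rcal)$ is confirmed to be at most $8$ in both YES and NO cases, combining this observation with Theorem~\ref{8-tmf} yields the desired NP-hardness.
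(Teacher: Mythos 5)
Your plan hinges on a claim that you never establish, and it is exactly the hard part: that the matrix produced by the reduction satisfies the promise, i.e.\ has tropical rank at most $8$, \emph{also} when the instance of Problem~\ref{ssref} is a NO instance. In the YES case the bound is indeed free (factor rank $\leq 8$ dominates tropical rank), but in the NO case Lemma~\ref{tolemma} says the factor rank exceeds $8$, so nothing is inherited and you are left with your programme of exhibiting $8$-term factorizations of all $9\times 9$ submatrices (or checking tropical permanents of the shapes inside the fixed block) by ``localized modifications'' of the building blocks of Lemma~\ref{fromlemma}. None of this is carried out, it is not obviously true, and doing it honestly would be a verification at least as long as Lemma~\ref{fromlemma} itself. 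There is a second, smaller gap: you assert that the passage from $\Ro$ to $\R$ via Lemma~\ref{Ro->R} ``preserves both rank functions'', but that lemma is stated and proved only for the factor rank; tropical rank is not even defined over $\Ro$ in the paper, and under any natural extension the substitution of infinite entries by a large finite number can create tropically nonsingular submatrices (already for a $2\times2$ pattern with a single finite entry), so the tropical-rank bound for the final real matrix would require its own argument.

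All of this machinery is unnecessary, and the paper takes a different, much lighter route: it never analyses the tropical rank of $A(\Sigma,\Rcal)$ at all. One can test in time $O(m^9n^9)$ whether an $m\times n$ matrix has tropical rank greater than $8$ (inspect all $9\times9$ submatrices), and by Theorem 1.4 of~\cite{DSS} tropical rank greater than $8$ already forces factor rank greater than $8$. Hence any algorithm for the restricted problem, preceded by this polynomial-time test, decides $8$-TMF on arbitrary matrices, which is NP-hard by Theorem~\ref{8-tmf}; that is all the theorem needs. If you insist on your many-one formulation, you must actually prove the tropical-rank bound for the constructed matrices in both the YES and NO cases, and for the matrix obtained after the $\Ro\to\R$ substitution; as written, your argument is incomplete at its central step.
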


\begin{proof}
Using the combinatorial definition of tropical rank provided in~\cite{DSS}, one can check in time $O(m^8n^8)$
whether the tropical rank of an $m$-by-$n$ matrix is greater than $7$ or not. Further, Theorem 1.4 of~\cite{DSS} shows that
the factor rank of a matrix is greater than $7$ if the tropical rank is. Thus the present theorem follows from Theorem~\ref{7-tmf}.
\end{proof}

\end{document}